\documentclass[letterpaper, 10 pt, conference]{ieeeconf}
\IEEEoverridecommandlockouts
\usepackage{cite}
\usepackage{amsmath,amssymb,amsfonts}
\usepackage{algorithmic}
\usepackage{graphicx}
\usepackage{textcomp}
\usepackage{prettyref}

\usepackage{pgfpages}
\usepackage{tikz, pgfplots}
\usepackage{mathtools}

\newcommand\scalemath[2]{\scalebox{#1}{\mbox{\ensuremath{\displaystyle #2}}}} 
\newtheorem{theorem}{Theorem}[section]

\DeclareRobustCommand{\vec}[1]{ 				
	\ifthenelse{\equal{#1}{\omega} \OR \equal{#1}{\varphi} \OR \equal{#1}{\alpha} \OR \equal{#1}{\beta} \OR \equal{#1}{\chi} \OR \equal{#1}{\delta} \OR \equal{#1}{\varepsilon} \OR \equal{#1}{\phi} \OR \equal{#1}{\epsilon} \OR \equal{#1}{\gamma} \OR \equal{#1}{\eta} \OR \equal{#1}{\iota} \OR \equal{#1}{\kappa} \OR \equal{#1}{\lambda} \OR \equal{#1}{\mu} \OR \equal{#1}{\nu} \OR \equal{#1}{\pi} \OR \equal{#1}{\theta} \OR \equal{#1}{\vartheta} \OR \equal{#1}{\rho} \OR \equal{#1}{\sigma} \OR \equal{#1}{\varsigma} \OR \equal{#1}{\tau} \OR \equal{#1}{\upsilon} \OR \equal{#1}{\xi} \OR \equal{#1}{\psi} \OR \equal{#1}{\zeta}}{
		\boldsymbol{#1}
	}{
		\mathbf{#1}
	}
}
 
\newcommand{\dd}{\mathop{}\!\mathrm{d}}
\newcommand{\Diff}[2]{\frac{\dd#1}{\dd#2}}

\newcommand{\DDiff}[2]{\frac{\dd^2#1}{\dd#2^2}}

\newcommand{\Part}[1]{\partial_{#1}}

\newcommand{\PartDiffT}[1]{\Diff{#1}{t}}

\newcommand{\PartDDiffT}[1]{\DDiff{#1}{t}}

\newcommand{\lie}[1]{\mathrm{L}_{#1}}

\newcommand{\liebrack}[2]{\left[{#1},{#2}\right]}
\newcommand{\abs}[1]{\left\vert#1\right\vert}

\providecommand{\of}[1]{\left(#1\right)}

\DeclareMathOperator{\sign}{sgn} 

\newcommand{\diag}{\operatorname*{diag}}

\newcommand{\Span}[1]{\mathrm{span}\{#1\}}

\AtEndOfClass{}

\AtEndOfClass{
\newcommand{\ExtraTabEqnSpace}{1ex}
\makeatletter
\newcommand*{\TabEqn}[1]{%
\begingroup
	\setbox\@tempboxa=\hbox{%
	#1%
	}%
	\setlength{\dimen@}{\ht\@tempboxa}%
	\addtolength{\dimen@}{\ExtraTabEqnSpace}%
	\setlength{\ht\@tempboxa}{\dimen@}%
	\setlength{\dimen@}{\dp\@tempboxa}%
	\addtolength{\dimen@}{\ExtraTabEqnSpace}%
	\setlength{\dp\@tempboxa}{\dimen@}%
	\usebox\@tempboxa
\endgroup
}
\makeatother
}

\newrefformat{fig}{Figure~\ref{#1}}
\newrefformat{eq}{(\ref{#1})}
\newrefformat{subeq}{(\ref{#1})}
\newrefformat{apx}{Appendix~\ref{#1}}
\newrefformat{cha}{Chapter~\ref{#1}}
\newrefformat{sec}{Section~\ref{#1}}
\newrefformat{sub}{Section~\ref{#1}}
\newrefformat{tab}{Table~\ref{#1}}
\newrefformat{def}{Definition~\ref{#1}}
\newrefformat{th}{Theorem~\ref{#1}}

\begin{document}
\title{\textbf{Analytic Optimal Control for a Class of Driftless x-Flat Systems}}
\author{Raphael Buchinger, Georg Hartl, Lukas Ecker, Markus Schöberl
\thanks{This work has been submitted to the IEEE for possible publication. Copyright may be transferred without notice, after which this version may no longer be accessible.}
\thanks{This research was funded in whole, or in part, by
the Austrian Science Fund (FWF) P36473. For the purpose of
open access, the author has applied a CC BY public copyright
licence to any Author Accepted Manuscript version arising
from this submission.}
\thanks{Raphael Buchinger, Georg Hartl, Lukas Ecker and Markus Schöberl are with the Institute
of Control Systems, Johannes
Kepler University, 4040 Linz, Austria (e-mail: raphael.buchinger@jku.at; georg.hartl@jku.at; lukas.ecker@jku.at; markus.schoeberl@jku.at).}
}

\maketitle
 
\begin{abstract}
This paper studies optimal trajectory-tracking for driftless, x-flat nonlinear systems with three states and two inputs. The tracking problem is formulated in Bolza form with a quadratic cost of the tracking error and its derivative. Applying Pontryagin's maximum principle yields a mixed regular-singular optimal control problem. By exploiting geometric properties and a specific relation between the weighting matrices, a closed-form expression for the costate and an explicit feedback law for both inputs is derived. Thereby, the numerical solution of a two-point boundary-value problem is avoided. The singular input leads to a bang-singular-bang optimal control structure, while on the singular arc, the tracking error dynamics reduces to a linear dynamics of order two. The approach is illustrated for the kinematic model of a steerable axle, demonstrating accurate trajectory-tracking.
\end{abstract}

\section{Introduction}
\label{sec:introduction}
One of the primary objectives of control theory is that the trajectories of a nonlinear system \vspace{-1ex}
\begin{equation}\label{eq:f_xu_m_inputs}
    \dot x = f(x,u)\vspace{-1ex}
\end{equation}
with $n$ states and $m$ inputs follow desired reference trajectories, commonly referred to as the trajectory-tracking problem. In the field of trajectory-tracking, two typical approaches can be distinguished, namely optimization-based methods and flatness-based methods. The purpose of this work is to combine both viewpoints within one framework for a certain class of systems.

In optimal control, trajectory-tracking is usually formulated as an optimal tracking problem with a quadratic cost functional \cite{lewis2012optimal, lober_optimal_2017}. Two primary solution approaches are the Hamilton-Jacobi-Bellman equation and Pontryagin's maximum principle (PMP). This work uses only the latter. However, synthesizing an optimal trajectory-tracking controller is challenging, even for low-dimensional systems. This is because the necessary conditions for optimality lead to a two-point boundary-value problem (TPBVP) and solving the TPBVP numerically requires substantial computational effort, which generally limits real-time applicability \cite{bryson1975applied}. 

For systems that are differentially flat, a framework that avoids these computational limitations for trajectory-tracking exists. A nonlinear system of the form \prettyref{eq:f_xu_m_inputs} is called \emph{differentially flat} if there exists an $m$-tuple of functions $y = \varphi(x, u, \dot u, \ldots, u^{(\nu)})$, the so-called \emph{flat output}, such that all states and inputs admit a \emph{flat parameterization} $(x, u) = F(y, \dot y, \ldots, y^{(r)})$ in terms of $y$ and finitely many of its time derivatives \cite{fliess_flatness_1995}. Note that the superscripts in round brackets denote the order of the respective time derivative. If the system allows for flat outputs of the form $y=\varphi(x,u)$ or $y=\varphi(x)$, the system is called $(x,u)$-flat or $x$-flat, respectively. 
As shown in \cite{fliess_flatness_1995,fliess_lie-backlund_1999}, flat systems allow for systematic trajectory planning and tracking. Trajectory generation is treated as a separate task and the construction of such trajectories is not the focus of this work. Methods for optimal trajectory generation for differentially flat systems by the use of the flat parametrization can be found, for example, in \cite{rams_optimal_2018,beaver_optimal_2024, guay_real-time_2006}.
Flatness-based tracking control design typically relies on exact linearization by endogenous feedback\footnote{Informally, for a system of the form \eqref{eq:f_xu_m_inputs} an endogenous feedback only involves states $x$, inputs $u$ and time derivatives of the inputs.}, transforming the closed-loop system into the so-called Brunovsk\'{y} normal form, i.e. a system of $m$ decoupled integrator chains $y_j^{(l_j)} = v_j$ of respective lengths $l_j$ between a new input $v$ and the flat output $y$. The tracking problem is then solved by linear control design methods.
As shown in \cite{delaleau_control_1998}, every flat system can be exactly linearized by a quasi-static feedback law $u=\alpha(\tilde{x}, v, \dot{v}, \ldots, v^{(\Tilde{s})})$ that depends on suitably chosen time derivatives of the flat output, collected in a so-called generalized Brunovsk\'{y} state $\tilde{x}$. While this preserves the order of the closed-loop dynamics, obtaining the required derivatives of the flat output from measurement data is often difficult in practice. For $(x,u)$-flat systems this difficulty can be circumvented, as there is guaranteed to exist a quasi-static feedback law $u=\alpha(x,v,\dot v, \ldots, v^{(s)})$
that depends on the classical state $x$ rather than the generalized Brunovsk\'{y} state, see~\cite{delaleau_control_1998, gstottner_tracking_2024}. As a result, one obtains a tracking control law of the form\vspace{-1ex}
\begin{equation}\label{eq:tracking_law}
    u = \alpha(x,\, y_d(t), \dot y_d(t), \ldots, y_d^{(r)}(t)),\vspace{-1ex}
\end{equation}
which depends solely on the current state $x$ and the reference trajectory $y_d(t)$ along with finitely many of its time derivatives. 

In this work, differential flatness is not used in the classical sense of trajectory parametrization or exact feedback linearization. Instead, the flatness-based characterization of the system class under consideration provides the geometric structure that enables the analytic PMP-based tracking design developed below.
In contrast to flatness-based approaches, the proposed framework does not require a separate design of a state-feedback law and a trajectory-tracking controller, however the resultion control law is again of the form \prettyref{eq:tracking_law}. 

A key element of the derivation is a specific quadratic structure of the cost functional, which permits an analytic, closed-form solution to the optimal tracking problem rather than a numerical solution of the associated TPBVP. As a result, the need to numerically solve the TPBVP at each time step is avoided, thereby facilitating real-time implementation. Moreover, the analysis shows that along a singular arc, a linear tracking-error dynamics of lower order than the original system can be obtained. The main contributions are
\begin{itemize}
    \item the development of an analytic PMP-based trajectory-tracking control law for a $x$-flat driftless nonlinear system with three states and two inputs,
    \item the derivation of a closed-form solution to the associated TPBVP through an explicit representation of the costate,
    \item the proof that the resulting optimal control exhibits a \textit{bang-singular-bang} structure and that the tracking error dynamics on the singular arc is linear and second-order.
\end{itemize}

\section{Notation and Preliminaries}\label{sec:prelim}
This section recalls standard definitions and notation used throughout this work. Tensor notation and the Einstein summation convention are employed whenever the index range is clear from the context. Given a symmetric matrix $M = [M_{ij}] \in \mathbb{R}^{n\times n}$, we write $M\succ0$ ($M\succeq0$) to indicate that $M$ is positive (semi-)definite.
\subsection{Differential Geometry}\label{subsec:geometry}
Let $\mathcal{M}$ denote an $n$-dimensional state manifold equipped with local coordinates $x=(x^1,\ldots,x^n)$ and let $u=(u^1,\ldots,u^m)\in\mathcal{U}\subset\mathbb{R}^m$ denote the $m$-dimensional input. Furthermore, let $\mathcal{T}(\mathcal{M})$ and $\mathcal{T}^*(\mathcal{M})$ denote the tangent bundle and cotangent bundle of $\mathcal{M}$, respectively. For a smooth function $\varphi^i \in C^{\infty}(\mathcal{M})$ and a smooth vector field $g = g^k\partial_{x^k} \in \mathcal{T}(\mathcal{M})$, where $\partial_{x^k} := \frac{\partial}{\partial x^k}$, the Lie derivative of $\varphi^i$ along $g$ is given by $\lie{g}\varphi^i := g^k\partial_{x^k}\varphi^i$, and iterated Lie derivatives are denoted by $\lie{g}^l\varphi^i := \lie{g}(\lie{g}^{l-1}\varphi^i)$. The differential of $\varphi^i$ is written as $\mathrm{d}\varphi^i$, and for an $m$-tuple $\varphi = (\varphi^1,\ldots,\varphi^m)$ we write $\mathrm{d}\varphi = (\mathrm{d}\varphi^1, \ldots, \mathrm{d}\varphi^m)$.
Given two vector fields $g_1, g_2 \in \mathcal{T}(\mathcal{M})$, the Lie bracket is defined as $[g_1,g_2] = (g_1^l\partial_{x^l}g_2^k - g_2^l\partial_{x^l}g_1^k)\partial_{x^k}$.

\subsection{Optimal Control Problem}
In the following, we consider nonlinear affine input systems of the form
\begin{equation} \label{eq:system_xn}
    \dot{x} = g_a(x)u^a, \qquad x(0) = x_0
\end{equation}
where $x \in \mathcal{M}$ with $\dim(\mathcal{M})=n$, $u \in \mathcal{U} \subset \mathbb{R}^m$, and $g_a\in\mathcal{T}(\mathcal{M})$ are smooth vector fields.
Consider the fixed time, free endpoint optimal control problem (OCP)
\begin{align} \label{eq:opt_problem}
        \min_{u^a(\cdot)}   &  \quad  J(u^a)  \\
        \text{s.t.}         & \quad\dot{x}^i  = g^i_a(x)u^a \,, \quad x(0) = x_0 \,, \quad \abs{u^a} \leq  u^a_{\text{max}}  \nonumber\,,
 \end{align}
with $t \in \mathcal{I} = [0, T]$ and the cost functional in Bolza form
\begin{align} \label{eq:cost_functional} 
     J(u^a) &= \phi(t,x)\Big|_{t=T} + \int_0^T\psi(t,x,u)\dd t
\end{align} 
where $\phi: \mathcal{I}\times\mathcal{M} \to \mathbb{R}$ and $\psi: \mathcal{I}\times\mathcal{M}\times\mathcal{U} \to \mathbb{R}$ are the terminal and running costs, respectively. 
Applying PMP to the given OCP, as in \cite{bryson1975applied,liberzon_calculus_2012}, yields the first order necessary conditions 
\begin{equation*} \label{eq:two_point_boundary}
    \scalemath{0.95}{\begin{aligned}
    \dot{x}^* &= \partial_\lambda H\of{t,x^*,u^*,\lambda^*}, \quad\quad &x^*\of{0} &= x_0 \\
    \dot{\lambda}^* &= -\partial_x H\of{t,x^*,u^*,\lambda^*}, \quad &\lambda^*\of{T} &= \partial_x\phi\of{T,x^*(T)}\\
\end{aligned}}
\end{equation*}
where $H = \psi(t,x,u) + \lambda_lg^l_a(x)u^a$ denotes the Hamiltonian, $u^*$ the optimal control, $x^*$ the corresponding solution of the initial value problem and $\lambda^*$ the corresponding costate. According to PMP, the optimal control $u^*$ minimizes the Hamiltonian
$ H\of{t,x^*,v,\lambda^*}$ for all $t \in \mathcal{I}$ over the set of admissible control $\mathcal{U}$, i.e. $H\of{t,x^*,v,\lambda^*} \geq H\of{t,x^*,u^*,\lambda^*}\,, \forall\, v\in \mathcal{U}$.\footnote{In the following, arguments and the superscript $^*$ are omitted whenever it is clear from the context.} The characterization of the optimal control $u$ depends on the structure of $H$ with respect to $u$, where in the regular case, the optimal control is determined by the stationary condition $\partial_uH=0$, providing the Hessian $\partial_{uu}^2H$ is regular and $u$ lies in the interior of $\mathcal{U}$. In particular, if $\partial_{uu}^2H\succ 0$ (\emph{strengthened Legendre-Clebsch condition}), then the optimal control is a strict local minimizer of $H$. If the unconstrained optimal $u$ violates the bounds, the constrained optimal control lies on the boundary $\partial\mathcal U$ and the corresponding control component is given by $u^a = \sign(u^a)u^a_{\text{max}}$. A \textit{singular arc} occurs if the Hessian $\partial_{uu}^2H$ is singular for one or more control inputs $u^b$. On a singular arc the stationary condition vanishes identically on a nontrivial interval $\mathcal I_s\subseteq \mathcal I$. Especially, for the $b$-th input, $\xi_b=\partial_{u^b}H\equiv 0\,, \,\, t\in\mathcal I_s$ and therefore the stationary condition does not allow to determine $u^b$ algebraically. To recover $u^b$, one differentiates the stationary condition $\xi_b$ with respect to time until $u^b$ appears explicitly. Thus,
$
    \frac{\dd^k}{\dd t^k}\xi_b\equiv 0,\, k=0,1,\dots,\bar k-1,\,\, t\in\mathcal{I}_s,
$
where $\bar k$ is the smallest integer such that
$
    \frac{\partial}{\partial u^b}\left(\frac{\dd ^{\bar k}}{\dd t^{\bar k}}\xi_b\right)\neq 0 \,,
\, t\in\mathcal{I}_s.
$
Furthermore, the integer $\bar k$ is even and the \emph{order} of the singular arc is defined as $p=\bar k/2$.
Moreover, the \emph{generalized Legendre-Clebsch condition} 
$
(-1)^p \frac{\partial}{\partial u^b}\left(\frac{\dd ^{2p}}{\dd t^{2p}}\xi_b\right)\succeq 0
\quad (\text{or } \succ 0)
$ must hold for $u^b$ to be an optimal candidate \cite{lewis_definitions_1980}. If $H$ is affine in $u^b$, $\xi_b$ is called the switching function and if $\xi_b=\partial_{u^b}H \neq 0$ the optimal control is given by the bounds $\partial \mathcal{U}$. Otherwise, the it follows from the analysis of the singular arc. 

\subsection{Differential Flatness}
 For a system of the form \prettyref{eq:system_xn} with $m=2$ inputs, the problem of flatness has been solved and can be found, e.g. in \cite{martin_feedback_1994, li_describing_2010}.
In the following and throughout this paper, we restrict ourselves to the three dimensional case. We consider $x$-flat systems of the form 
\begin{align} \label{eq:system}
    \dot{x} = g_1(x)u^1 + g_2(x)u^2 \,,
\end{align} where $x \in \mathcal{M}$ with $\dim(\mathcal{M})=3$, and  the input vector fields $g_a\in\mathcal{T}(\mathcal{M})$ form the distribution $\mathcal{D} = \Span{g_1,g_2}$. 
According to \cite[Theorem 5]{li_describing_2010}, $\varphi = (\varphi^1,\varphi^2)$ is a $x$-flat output of \prettyref{eq:system} if an only if $\dd \varphi^1 $ and $\dd \varphi^2$ are locally linearly independent, the annihilator of $\mathcal{L} = (\Span{\dd \varphi^1,\dd \varphi^2})^\perp \subset \mathcal{D}$ and $\mathcal{D} \not\subset \mathcal{L}$. 

 In particular, a system of the form \prettyref{eq:system} is $x$-flat, if and only if, it is equivalent to the so-called chained form 
 \begin{equation}\label{eq:CF}
     \dot{z}^1 = v^1\,, \dot{z}^2 = z^3v^1\,, \dot{z}^3 = v^2,
 \end{equation} 
via a state transformation $z=\Phi_z(x)$ and a static input transformation $v=\Phi_v(x,u)$.
In particular, in these coordinates, the flat output is given by the first two components of the state $\varphi = (z^1,z^2)$. 
For a flat system of the form \prettyref{eq:system}, the time derivative of a flat-output component $\varphi^i(x)$ is given by
\begin{equation*}
    \dot \varphi^i = \lie{g_1}\varphi^iu^1 + \lie{g_2}\varphi^iu^2, \qquad i=1,2\,.
\end{equation*}
Given that $\dot \varphi = (v^1, z^3v^1)$ in the coordinates of \prettyref{eq:CF} and that $\operatorname{rank}\of{\partial_{u}(\dot \varphi^1, \dot \varphi^2)}$ holds regardless of the chosen state and input coordinates, it follows that there always exists a static input transformation $\tilde{u}=\Phi_{\tilde{u}}(x,u)$ such that, in the new input coordinates, the system is given by $\dot{x}=\tilde{g}_1 \tilde{u}^1+\tilde{g}_2 \tilde{u}^2$ and $\lie{\tilde{g}_2}\varphi^i = 0$. Therefore, throughout this work, we assume locally without loss of generality that only the input $u^1$ enters the first time derivative of each flat output component $\varphi^i$ of the considered system \prettyref{eq:system}, meaning  
\begin{equation} \label{eq:lie_g2}
    \lie{g_2}\varphi^i = 0, \qquad i=1,2 \, . 
\end{equation}
Furthermore, for the system in chained form \prettyref{eq:CF}, the vector fields and the corresponding Lie bracket are given by 
\begin{equation*}
 \scalemath{0.95}{
    \begin{aligned}
    g_1 &= \partial_{z^1} + z^3\partial_{z^2}\,, \quad g_2 = \partial_{z^{3}}\,, \quad [g_1,g_2] =   -\partial_{z^{2}} \,.
    \end{aligned}}
\end{equation*} Hence, the Lie derivatives of the flat outputs along these vector fields satisfy 
\begin{equation} \label{eq:lie_deriv}
     \scalemath{0.95}{\begin{aligned}
        \lie{g_1}\varphi^1 &= 1 \\
        \lie{g_1}\varphi^2 &= z^3 \\
    \end{aligned} \qquad
    \begin{aligned}
        \lie{g_2}\varphi^1 &= 0 \\
        \lie{g_2}\varphi^2 &= 0
    \end{aligned}  \qquad
    \begin{aligned}
        \lie{[g_1,g_2]}\varphi^1 &= 0 \\
        \lie{[g_1,g_2]}\varphi^2 &= -1 \,.
    \end{aligned}}
\end{equation}
From \prettyref{eq:lie_deriv} it can be seen that $\lie{g_1}\varphi$ and $\lie{[g_1,g_2]}\varphi$ are locally linearly independent and therefore
 \begin{align} \label{eq:non_deg}
         \scalemath{0.95}{\det\begin{bmatrix}
             \lie{g_1}\varphi^1 & \lie{[g_1,g_2]}\varphi^1 \\
             \lie{g_1}\varphi^2 & \lie{[g_1,g_2]}\varphi^2
         \end{bmatrix} \neq 0}
\end{align}
must hold. The regularity of \prettyref{eq:non_deg} will turn out to be important for our investigations.

\section{Problem Statement}
Although the following proposed formulation differs from the classical LQR/LQT state-space setting, it follows the same underlying design principle, namely a quadratic tracking objective. In our approach, the terminal and running costs are chosen in an LQT-inspired form, but expressed in terms of the tracking error and its time derivative. In the following, the tracking error and its time derivative are defined as
\begin{align*}
    e^i = y_d^i-\varphi^i \,, \qquad
    \dot{e}^i = \dot{y}_d^i-\lie{{g}_1}\varphi^iu^1\,.
\end{align*}
The terminal cost is written as
\begin{equation}\label{eq:terminal_cost}
    \phi(t,x) = \frac{1}{2} e^i\bar{Q}_{ij}e^j \,,
\end{equation} with the smooth desired output trajectory $y_d \in C^\infty(\mathcal{I}, \mathbb{R}^2)$ and the constant positive definite weight matrix $\bar{Q} = [\bar{Q}_{ij}] \succ 0$. Similarly, the running cost is defined as 
\begin{align}\label{eq:running_cost}
    \psi(t,x,u) &=
    \frac{1}{2}\Big(e^iQ_{ij}e^j + \dot{e}^iM_{ij}\dot{e}^j\Big)\,,
\end{align} with the constant positive definite weight matrices $Q = [Q_{ij}], \, M = [M_{ij}] \succ 0$ of the tracking error and its time derivative, respectively. Thus, the Hamiltonian is given by 
\begin{align*}
    H &=    \frac{1}{2}\Big(e^iQ_{ij}e^j + \dot{e}^iM_{ij}\dot{e}^j\Big)
    + \lambda_l g^l_a u^a,
\end{align*}
 and the first order necessary conditions follow as
\begin{align}
    &\quad\,\,\scalemath{0.94}{\dot{\lambda}_k = e^iQ_{ij}\Part{x^k}\varphi^j + \dot{e}^iM_{ij}\Part{x^k}\lie{{g}_1}\varphi^ju^1 - \lambda_l\Part{x^k}g^l_au^a} \label{eq:costate_diff}\\
    &\scalemath{0.94}{\lambda_k\big|_{t=T} =  -e^i\bar{Q}_{ij}\Part{x^k}\varphi^j\big|_{t=T}} \label{eq:boundary_cond} \\
    &\,\scalemath{0.94}{\partial_{u^1}H = -\dot{e}^iM_{ij}\lie{{g}_1}\varphi^j + \lambda_lg^l_1 = \xi_1} \label{eq:opt_u1_stat}\\
    &\,\scalemath{0.94}{\partial_{u^2}H = \lambda_lg^l_2 = \xi_2} \label{eq:opt_u2_stat}\,.
\end{align}
For optimality, the second order necessary condition $\partial^2_{u u}H \succeq 0$ must hold for all $t \in \mathcal{I}$. For the regular control $u^1$, the \textit{strengthened Legendre–Clebsch} $\partial^2_{u^1u^1} H > 0$ is additionally assumed, implying that $u^1$ is a strict local minimizer. Solving the stationary condition \prettyref{eq:opt_u1_stat} for $u^1$, yields
\begin{align} \label{eq:opt_u1}
    u^1 &= \frac{\dot{y}_d^iM_{ij}\lie{g_1}\varphi^j - \lambda_lg^l_1}{A_{11}}
\end{align}
where $A_{11} = \lie{g_1}\varphi^iM_{ij}\lie{g_1}\varphi^j > 0$. In contrast to $u^{1}$, which is directly determined by the stationary condition $\xi_1=0$, the control $u^{2}$ is singular since $\partial^2 _{u^{2}u^{2}}H\equiv0$ due to $\lie{g_2}\varphi^i=0$. Consequently, the stationary condition does not allow for a direct algebraic solution of $u^{2}$. To determine the singular control law, the switching function \prettyref{eq:opt_u2_stat} must be differentiated with respect to time until $u^{2}$ appears explicitly. For such a singular arc to be optimal, the generalized Legendre–Clebsch condition must be satisfied as a necessary second-order condition. Differentiating \prettyref{eq:opt_u2_stat} with respect to time yields
\begin{align} \label{eq:consistency_condition}
       \PartDiffT{}\xi_2 = \Big[-\dot{e}^iM_{ij}\lie{[g_1,g_2]}\varphi^j + \lambda_l[g_1,g_2]^l\Big]u^1 \overset{!}{=} 0\,,
\end{align}
where $u^{2}$ does not appear, since $[g_2,g_2]=0$ trivially. Taking the derivative of \prettyref{eq:consistency_condition} yields
\begin{equation} \label{eq:consistency_condition_order2}
\scalemath{0.96}{\begin{aligned}
        \PartDDiffT{}\xi_2 &= \Big[-(\ddot{y}_d^i - \lie{g_a}\lie{g_1}\varphi^iu^au^1-\lie{g_1}\varphi^i\dot{u}^1)M_{ij}\lie{[g_1,g_2]}\varphi^j \\
        &- \dot{e}^iM_{ij}\lie{\liebrack{g_2}{[g_1,g_2]}}\varphi^ju^2 +  e^iQ_{ij}\lie{[g_1,g_2]}\varphi^j \\
        & - \dot{e}^iM_{ij}\lie{\liebrack{g_1}{[g_1,g_2]}}\varphi^ju^1 + \lambda_l\liebrack{g_a}{[g_1,g_2]}^lu^a\Big]u^1 \\ 
        &+ \underbrace{\Big[-\dot{e}^iM_{ij}\lie{[g_1,g_2]}\varphi^j + \lambda_l[g_1,g_2]^l\Big]}_{\overset{\prettyref{eq:consistency_condition}}{=}0}\dot{u}^1  \overset{!}{=} 0 \, .
\end{aligned}}
\end{equation}
From this equation, $u^{2}$ can be determined, provided that the \textit{strengthened generalized Legendre-Clebsch} condition $-\partial_{u^2}\frac{\dd^{2}}{\dd t^{2}}\xi_2 > 0 ,\, \forall x \in \mathcal{M},\, t \in \mathcal{I}_s$ is satisfied. In the standard PMP formulation, the resulting singular optimal control problem is typically associated with a TPBVP, whose numerical solution is nontrivial due to possible switching behavior and to the differential algebraic structure of the stationary conditions \prettyref{eq:opt_u1_stat} -- \prettyref{eq:consistency_condition_order2}. This difficulty could be avoided by analytically solving the costate differential equation, thereby eliminating the need to solve the TPBVP numerically. This constitutes the main result of the present work.
\section{Main results}
 To avoid the explicit integration of the costate differential equations, we seek a product ansatz for the costate 
\begin{equation} \label{eq:lambda_ansatz}
{\lambda}_k(t,x) = -r_j(t,x) \partial_{x^k} \varphi^j(x)\,.
\end{equation}
For this ansatz to satisfy the optimality conditions and the consistency condition \prettyref{eq:opt_u1_stat}, \prettyref{eq:opt_u2_stat} and \prettyref{eq:consistency_condition} respectively, the geometric implication \prettyref{eq:non_deg} must hold. 
Since the system is flat the distribution $ \Span{g_1, g_2, [g_1,g_2]}$ spans the entire tangent space $\mathcal{T}(\mathcal{M})$, which leads to our first result.
\begin{theorem} \label{th:solution_costate_u}
    Consider the system \prettyref{eq:system} together with the OCP \prettyref{eq:opt_problem}, with the cost functional \prettyref{eq:cost_functional} and the associated terminal and running costs \prettyref{eq:terminal_cost} and \prettyref{eq:running_cost}. The components of the costate $\lambda \in \mathcal{T}^*(\mathcal{M})$ on the singular arc are given by
    \begin{align*}
        \lambda_k = -e^i\bar{Q}_{ij}\Part{x^k}\varphi^j = \dot{e}^iM_{ij}\Part{x^k}\varphi^j \,
    \end{align*}
    with $Q_{ij} = \bar{Q}_{il}M^{lm}\bar{Q}_{mj}$.
\end{theorem}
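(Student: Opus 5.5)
The plan is to verify that the product ansatz \prettyref{eq:lambda_ansatz} with $r_j = e^i\bar{Q}_{ij}$ satisfies every first order necessary condition on the singular arc. These are the costate equation \prettyref{eq:costate_diff}, the terminal condition \prettyref{eq:boundary_cond}, the stationary conditions \prettyref{eq:opt_u1_stat}, \prettyref{eq:opt_u2_stat}, and the consistency condition \prettyref{eq:consistency_condition}. Since \prettyref{eq:costate_diff} is linear in $\lambda$ with a prescribed terminal value, a solution satisfying all of these is the costate.

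\emph{Step 1 (algebraic conditions).} Insert $\lambda_l = -r_j\Part{x^l}\varphi^j$ into the switching function. Then $\xi_2 = -r_j\lie{g_2}\varphi^j = 0$ holds identically by \prettyref{eq:lie_g2}. The remaining conditions read
\begin{align*}
\xi_1 &= -(\dot e^iM_{ij}+r_j)\lie{g_1}\varphi^j = 0, \\
\tfrac{1}{u^1}\PartDiffT{}\xi_2 &= -(\dot e^iM_{ij}+r_j)\lie{[g_1,g_2]}\varphi^j = 0 .
\end{align*}
This is a homogeneous $2\times 2$ linear system in $(\dot e^iM_{ij}+r_j)$, and its coefficient matrix is regular by \prettyref{eq:non_deg}. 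Hence $r_j = -\dot e^iM_{ij}$, which gives the second expression $\lambda_k = \dot e^iM_{ij}\Part{x^k}\varphi^j$.

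\emph{Step 2 (costate ODE).} Differentiate the ansatz along the flow:
\[
\dot\lambda_k = -\dot r_j\Part{x^k}\varphi^j - r_j g_a^l\Part{x^l}\Part{x^k}\varphi^j u^a .
\]
For the right-hand side of \prettyref{eq:costate_diff}, use $\dot e^iM_{ij} = -r_j$ and $\lie{g_2}\varphi^j=0$ to write $\dot e^iM_{ij}\Part{x^k}\lie{g_1}\varphi^j u^1 = -r_j\Part{x^k}(\lie{g_a}\varphi^j)u^a$. Expanding $\Part{x^k}(g_a^l\Part{x^l}\varphi^j)$ by the product rule, the term involving $\Part{x^k}g_a^l$ cancels against $-\lambda_l\Part{x^k}g^l_a u^a$. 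The Hessian terms then match the left-hand side. What remains is $(\dot r_j + e^iQ_{ij})\Part{x^k}\varphi^j = 0$, which by linear independence of $\dd\varphi^1,\dd\varphi^2$ is equivalent to $\dot r_j = -e^iQ_{ij}$.

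\emph{Step 3 (choice of $r$ and terminal condition).} Set $r_j = e^i\bar{Q}_{ij}$. Then \prettyref{eq:boundary_cond} holds at $t=T$ automatically, and in fact for all $t$ this yields $\lambda_k = -e^i\bar Q_{ij}\Part{x^k}\varphi^j$. Step 1 then enforces $\dot e^iM_{ij} = -e^i\bar{Q}_{ij}$, so $\dot e = -M^{-1}\bar Q e$ using the symmetry of the weights. Consequently
\[
\dot r_j = \dot e^i\bar Q_{ij} = -e^l\bar Q_{lm}M^{mi}\bar Q_{ij} = -e^iQ_{ij},
\]
precisely when $Q = \bar Q M^{-1}\bar Q$. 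This closes Step 2. The main obstacle is this compatibility: the relation $r=-\dot e M$ forced by Step 1 and the ODE from Step 2 must agree with the terminal-cost form of $r$ on the whole arc, not only at $T$. The stated weight relation is exactly what makes the three conditions consistent. I would also check carefully the index and transposition conventions and the cancellation in Step 2.
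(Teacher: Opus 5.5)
Your proposal is correct and follows essentially the same argument as the paper: insert the ansatz, obtain $r_j=-\dot e^iM_{ij}$ from $\xi_1$, $\dot\xi_2$ and the non-degeneracy condition, reduce the costate equation to $\dot r_j=-e^iQ_{ij}$, choose $r_j=e^i\bar Q_{ij}$ (which forces $Q=\bar QM^{-1}\bar Q$), and conclude by uniqueness of the terminal-value problem. The only step you omit is the paper's preliminary one without the ansatz: writing $\lambda_k=\dot e^iM_{ij}\partial_{x^k}\varphi^j+c_k$, the three conditions make $c$ annihilate the frame $g_1,g_2,[g_1,g_2]$, so $\lambda_k=\dot e^iM_{ij}\partial_{x^k}\varphi^j$ holds for any costate on the singular arc, which justifies the ansatz rather than only verifying it.
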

\begin{proof}
The proof is split into three steps. First, the stationary conditions \prettyref{eq:opt_u1_stat}, \prettyref{eq:opt_u2_stat} and the consistency condition \prettyref{eq:consistency_condition} are used to identify the costate. Second, the ansatz \prettyref{eq:lambda_ansatz} is inserted into the costate differential equation \prettyref{eq:costate_diff} and the boundary condition \prettyref{eq:boundary_cond}. Third, the special choice $r_j=e^i\bar{Q}_{ij}$ is shown to satisfy the resulting relations under the condition 
$ Q_{ij} = \bar{Q}_{il}M^{lm}\bar{Q}_{mj} $. From the stationary conditions \prettyref{eq:opt_u1_stat}, \prettyref{eq:opt_u2_stat} and the consistency condition derived in \prettyref{eq:consistency_condition}, it follows that the costate $\lambda $ must satisfy
\begin{align}
    \lambda_kg^k_2 &= 0 \label{eq:xi2}\\
    (\lambda_k - \dot{e}^iM_{ij}\Part{x^k}\varphi^j)g^k_1 &= 0 \label{eq:xi1}\\
    (\lambda_k - \dot{e}^iM_{ij}\Part{x^k}\varphi^j)[g_1,g_2]^k &= 0 \label{eq:xi2_dot}\,.
\end{align}
Decomposing the costate as $\lambda_k =\dot{e}^iM_{ij}\partial_{x^{k}}\varphi ^{j}+c_k$ and substituting into the optimality conditions, it follows that the residual co-vector $c \in \mathcal{T}^*(\mathcal{M})$ must satisfy $c_k g^k_1  = 0, \,  c_kg^k_2 = 0, \,  c_k [g_1, g_2]^k = 0$.
Since $c$ annihilates the basis of $\mathcal{T}(\mathcal{M})$ it follows that $c=0$ identically and therefore, $\lambda_k =\dot{e}^iM_{ij}\partial_{x^{k}}\varphi ^{j}$. In particular, the costate lies in the span of the differentials of the flat output, i.e. $\lambda \in \Span{\dd \varphi^1, \dd \varphi^2}$, which represents the first part of the proof. 

In the second part, the ansatz \prettyref{eq:lambda_ansatz} is inserted into the costate differential equation \prettyref{eq:costate_diff} and boundary condition \prettyref{eq:boundary_cond} in order to characterize the coefficient $r_j$. To verify that the ansatz satisfies the costate differential equation, \prettyref{eq:lambda_ansatz} is differentiated with respect to time, yielding
\begin{equation*}
    \scalemath{0.92}{
    \begin{aligned}
            \dot{{\lambda}}_k &= -\dot{r}_j\Part{x^k}\varphi^j - r_j\Part{x^l}\Part{x^k}\varphi^jg^l_au^a \\
            &= e^iQ_{ij}\Part{x^k}\varphi^j +  \dot{e}^iM_{ij}\Part{x^k}\lie{{g}_1}\varphi^ju^1 + r_j\Part{x^l}\varphi^j\Part{x^k}g^l_au^a \,.
    \end{aligned}
    }
\end{equation*}
By the use of the identity $\Part{x^l}\varphi^j\Part{x^k}g^l_a  = \Part{x^k}\lie{g_a}\varphi^j - \Part{x^k}\Part{x^l}\varphi^jg^l_a$ and
Schwarz’s Theorem it follows that 
\begin{align} \label{eq:inserted_ansatz}
    \scalemath{0.9}{\Big[e^iQ_{ij} + \dot{r}_j\Big]\Part{x^k}\varphi^j + \big[r_j  + \dot{e}^iM_{ij}\big]\Part{x^k}\lie{g_1}\varphi^ju^1 \overset{!}{=}0}\,,
\end{align} has to be met. To prove that all optimality conditions are satisfied, it must be shown that the boundary condition \prettyref{eq:boundary_cond} holds. Inserting the ansatz \prettyref{eq:lambda_ansatz} into \prettyref{eq:boundary_cond}, gives 
$$-e^i\bar{Q}_{ij}\Part{x^k}\varphi^j\big|_{t=T} = -r_j\Part{x^k}\varphi^j\big|_{t=T}.$$

In the third step, we explicitly determine the function $r_j$.
Inserting \prettyref{eq:lambda_ansatz} into \prettyref{eq:xi1} and \prettyref{eq:xi2_dot} results in 
\begin{equation}\label{eq:opt_inserted_ansatz}
    \begin{aligned} 
    -(r_j + \dot{e}^iM_{ij})\lie{g_1}\varphi^j &= 0 \\
    -(r_j + \dot{e}^iM_{ij})\lie{[g_1,g_2]}\varphi^j &= 0 \,.
\end{aligned}
\end{equation}
Hence, from \prettyref{eq:opt_inserted_ansatz} together with \prettyref{eq:non_deg}, it follows that
\begin{align} \label{eq:r_arc}
    r_j = -\dot{e}^iM_{ij}\,.
\end{align}
Due to \prettyref{eq:r_arc}, the second term in \prettyref{eq:inserted_ansatz} vanishes. Thus, the coefficient in the ansatz \prettyref{eq:lambda_ansatz} must satisfy 
\begin{align}
    \dot{r}_j = -e^iQ_{ij} \,, \quad r_j\big|_{t=T} = e^i\bar{Q}_{ij}\big|_{t=T}\,.
\end{align}
A particular solution is given by
\begin{align} \label{eq:r_solution_ansatz}
    r_l = e^i\bar{Q}_{il} \,,
\end{align} 
which satisfies the terminal condition by construction. Moreover, from \prettyref{eq:r_arc} it follows that 
\begin{align} \label{eq:e_dot}
    \dot{e}^m = -r_lM^{lm} \,,
\end{align} where $M^{lm}$ is the inverse of $M_{ij}$ defined by $\delta^m_i = M_{il}M^{lm}$. Inserting \prettyref{eq:r_solution_ansatz} into \prettyref{eq:e_dot} yields $\dot{e}^m = -e^i\bar{Q}_{il}M^{lm}$.
Hence, the time derivative of \prettyref{eq:r_solution_ansatz} is given by
\begin{align} \label{eq:r_inserted}
    \dot{r}_j &= \dot{e}^m\bar{Q}_{mj} = -e^i\bar{Q}_{il}M^{lm}\bar{Q}_{mj} \,.
\end{align} Inserting \prettyref{eq:r_inserted} together with \prettyref{eq:r_arc} into \prettyref{eq:inserted_ansatz} leads to 
\begin{align*}
       e^i\Big[Q_{ij} - \bar{Q}_{il}M^{lm}\bar{Q}_{mj}\Big]\Part{x^k}\varphi^j \overset{!}{=}0 \,,
\end{align*}
which can only hold true if $Q_{ij} = \bar{Q}_{il}M^{lm}\bar{Q}_{mj}$, or in matrix form $Q = \bar{Q}M^{-1}\bar{Q}$.
By the uniqueness of the solution to this terminal-value problem, it follows 
that~\eqref{eq:lambda_ansatz},
where $r_j=-e^i\bar{Q}_{ij} =  \dot{e}^iM_{ij}$.
\end{proof} 
An immediate consequence of \prettyref{th:solution_costate_u} is that the error dynamic on the singular arc is given by $\dot{e}^j = A^j_ie^i$ with $A^j_i = -\bar{Q}_{il}M^{lj}$. Thus, on the singular arc, the error dynamics is linear and of dimension two. Furthermore, \prettyref{th:solution_costate_u} obviates the need to solve the TPBVP and enables the optimal control input $u^a$ to be implemented in feedback form, as stated in the following theorem. 
\begin{theorem} \label{th:opt_solution}
Consider a system of the form 
\begin{align} \label{eq:system_th}
    \dot{x} = g_1(x)u^1 + g_2(x)u^2 \,,
\end{align}
with $\dim(x)=3$ and the OCP \prettyref{eq:opt_problem}, with the cost functional \prettyref{eq:cost_functional} and the associated terminal and running costs \prettyref{eq:terminal_cost} and \prettyref{eq:running_cost}. An optimal solution candidate is given by 
    \begin{align*}
        u^1 =
        \begin{cases}
            \sign(u^1_{\text{sing}})\,u^1_{\max}, & |u^1_{\text{sing}}| \geq u^1_{\max},\\
            u^1_{\text{sing}}, & \text{otherwise}
        \end{cases}
    \end{align*}
    \begin{align*}
        u^2 =
        \begin{cases}
            \sign(\dot{\xi}_2)\,u^2_{\max}, & \dot{\xi}_2 \neq 0\,,\\
            \sign(u^2_{\text{sing}})\,u^2_{\max} &|u^2_{\text{sing}}| \geq u^2_{\max}\,\text{and}\, \dot{\xi}_2 = 0\\
            u^2_{\text{sing}}, & \text{otherwise,}
        \end{cases}
    \end{align*}
    with
    \begin{equation*}
    \scalemath{0.92}{\begin{aligned}
    u^1_{\text{sing}} &= \frac{\Big[\dot{y}_d^iM_{ij} +  e^i\bar{Q}_{ij}\Big]\lie{g_1}\varphi^j}{A_{11}}  \quad
    u^2_{\text{sing}} = \frac{A_{11}b_2-A_{21}b_1}{(A_{12}A_{21}-A_{11}A_{22})u^1}
    \end{aligned}}
    \end{equation*}
     where
    \begin{equation*}
    \scalemath{0.92}{\begin{aligned}
    A_{11} &= \lie{g_1}\varphi^iM_{ij}\lie{g_1}\varphi^j \qquad \qquad
    A_{12} = \lie{[g_1,g_2]}\varphi^iM_{ij}\lie{g_1}\varphi^j \\ 
    A_{21} &= \lie{g_1}\varphi^iM_{ij}\lie{[g_1,g_2]}\varphi^j \qquad \,\,
    A_{22} = \lie{[g_1,g_2]}\varphi^iM_{ij}\lie{[g_1,g_2]}\varphi^j \\
    b_1 &= ((\ddot{y}_d^i - \lie{g_1}^2\varphi^i(u^1)^2)M_{ij} - e^iQ_{ij})\lie{g_1}\varphi^j\\
    b_2 &= ((\ddot{y}_d^i - \lie{g_1}^2\varphi^i(u^1)^2)M_{ij} - e^iQ_{ij})\lie{[g_1,g_2]}\varphi^j\,
    \end{aligned}}
    \end{equation*}
    and $Q_{ij} = \bar{Q}_{il}M^{lm}\bar{Q}_{mj}$ providing $u^1 \neq 0$, and \prettyref{eq:system_th} is $x$-flat.
\end{theorem}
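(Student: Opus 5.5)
The proof is a direct application of PMP together with the closed-form costate of Theorem~\ref{th:solution_costate_u}. We treat $u^1$ first, then $u^2$.

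\emph{Step 1: the regular input $u^1$.} We start from the stationary condition \eqref{eq:opt_u1} and substitute the costate from Theorem~\ref{th:solution_costate_u}, $\lambda_l=-e^i\bar Q_{ij}\partial_{x^l}\varphi^j$. Then $\lambda_l g_1^l=-e^i\bar Q_{ij}\lie{g_1}\varphi^j$, and $u^1$ becomes $u^1_{\text{sing}}$ exactly as stated. The quantity $A_{11}>0$ by $M\succ0$, since $\lie{g_1}\varphi\neq0$ by \eqref{eq:lie_deriv}. Because $H$ is strictly convex in $u^1$ by the strengthened Legendre--Clebsch condition, minimizing $H$ over the interval $|u^1|\le u^1_{\max}$ gives the saturated value. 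The unconstrained minimizer is projected onto the bound, which yields the first case distinction.

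\emph{Step 2: the structure of $u^2$.} The Hamiltonian is affine in $u^2$ with switching function $\xi_2=\lambda_lg_2^l$.
\begin{itemize}
\item Off the singular arc, PMP gives bang control with sign opposite to $\xi_2$.
\item The statement phrases the switching through $\dot\xi_2$. By \eqref{eq:consistency_condition}, $\dot\xi_2$ is $u^1$ times the bracket term. So $\xi_2\equiv0$ on an interval requires $\dot\xi_2=0$ there, and $\dot\xi_2\neq0$ signals that the arc is not singular, giving bang control.
\item I will adopt the sign convention as written in the statement. I will check the sign against the convention that $\sign$ tracks the direction decreasing $\xi_2$ near a junction.
\end{itemize}

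\emph{Step 3: the singular control.} On the arc $\xi_2=\dot\xi_2=0$ holds, so $\ddot\xi_2=0$ from \eqref{eq:consistency_condition_order2}. Since $u^1\neq0$, we divide by $u^1$. Two eliminations are needed.
\begin{itemize}
\item Theorem~\ref{th:solution_costate_u} gives $\lambda$ on the arc. The term $\lambda_l[g_a,[g_1,g_2]]^lu^a$ combines with $-\dot e^iM_{ij}\lie{[g_a,[g_1,g_2]]}\varphi^j u^a$. Because $\lambda_l=\dot e^iM_{ij}\partial_{x^l}\varphi^j$, these terms cancel exactly. This cancellation removes the explicit $\lambda$ and the bracket-of-bracket terms.
\item The time derivative $\dot u^1$ appears. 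We eliminate it by differentiating $\xi_1=0$, i.e.\ \eqref{eq:xi1}, in time. Equivalently, we use the closed form $u^1_{\text{sing}}$, or the identity $\dot e=-M^{-1}\bar Q e$ on the arc.
\end{itemize}
This produces two linear equations in the unknowns $(\dot u^1 , u^2 u^1)$. Using $\lie{g_2}\lie{g_1}\varphi=\lie{[g_2,g_1]}\varphi$, valid since $\lie{g_2}\varphi=0$ by \eqref{eq:lie_g2}, the equations can be written in the form
\[
\begin{bmatrix}A_{11}&A_{12}\\A_{21}&A_{22}\end{bmatrix}
\begin{bmatrix}\dot u^1\\ u^1u^2\end{bmatrix}
=\begin{bmatrix}b_1\\b_2\end{bmatrix}
\]
up to signs. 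The system uses $\ddot e=\ddot y_d-\lie{g_1}^2\varphi (u^1)^2-\lie{g_1}\varphi\,\dot u^1+\lie{[g_1,g_2]}\varphi\, u^1u^2$ and $Q=\bar QM^{-1}\bar Q$ to write the $e$-terms.

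\emph{Step 4: solvability.} The matrix in Step 3 is the Gram matrix $A=B^\top MB$ with $B=[\lie{g_1}\varphi,\ \lie{[g_1,g_2]}\varphi]$. It is nonsingular by \eqref{eq:non_deg} and $M\succ0$, so its determinant $A_{11}A_{22}-A_{12}A_{21}\neq0$. Cramer's rule then gives $u^2_{\text{sing}}$ with the stated denominator. If $u^2_{\text{sing}}$ exceeds the bound, it is saturated.

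Finally, the generalized Legendre--Clebsch coefficient is $-\partial_{u^2}\ddot\xi_2\propto (u^1)^2\,\det A/A_{11}>0$. This positivity justifies calling the result a \emph{candidate}.

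\emph{Main obstacle.} The hardest step is the bookkeeping in Step 3: showing that all $\lambda$- and second-bracket terms cancel, and that the $\dot u^1$ elimination produces precisely the $A$, $b$ entries with the stated signs.
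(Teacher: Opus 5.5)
Your treatment of $u^1$ and of the singular value $u^2_{\text{sing}}$ matches the paper. You insert the closed-form costate into \eqref{eq:opt_u1}, eliminate $\dot u^1$ through $\dot\xi_1=0$, and solve $A_{11}\dot u^1-A_{12}u^1u^2=b_1$, $A_{21}\dot u^1-A_{22}u^1u^2=b_2$ by Cramer's rule; this gives exactly the stated formulas. Your generalized Legendre--Clebsch coefficient $(u^1)^2(A_{11}A_{22}-A_{12}A_{21})/A_{11}$ is the paper's $f^2$ evaluated on $\mathcal S$. Cancelling the second-bracket terms directly via $\lambda_lX^l=\dot e^iM_{ij}\lie{X}\varphi^j$ is a valid shortcut for the paper's expansion of $[g_a,[g_1,g_2]]$ in the basis $g_1,g_2,[g_1,g_2]$. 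That expansion reappears, through $\alpha^3,\beta^3$, in the paper's off-manifold computation of $\ddot\xi_2$, where your identity no longer holds.

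The gap is the bang branch $u^2=\sign(\dot\xi_2)u^2_{\max}$. You explicitly leave its sign unverified, and the route you suggest cannot verify it. With the costate of Theorem~\ref{th:solution_costate_u}, $\xi_2=-e^i\bar Q_{ij}\lie{g_2}\varphi^j\equiv 0$ by \eqref{eq:lie_g2}. Minimizing $H$ therefore places no restriction on $u^2$. Moreover, wherever a nontrivial switching function exists, $-\sign(\xi_2)$ equals $\sign(\dot\xi_2)$ only while $|\xi_2|$ is decreasing toward the singular arc; on arcs leaving it, the sign is the opposite.

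The missing idea is the paper's attractivity argument:
\begin{itemize}
\item Off the manifold, eliminate $\dot u^1$ via $\dot\xi_1=0$ and write $\ddot\xi_2=f^1-f^2u^2=(h^2-u^2)f^2$.
\item Use the Legendre--Clebsch and generalized Legendre--Clebsch conditions to obtain $f^2>\delta>0$ for small $|\dot\xi_2|$.
\item Under the extra assumption $|h^2|<u^2_{\max}-\varepsilon$, show that $V=\tfrac12\dot\xi_2^2$ satisfies $\dot V\le-|\dot\xi_2|\varepsilon\delta$ when $u^2=\sign(\dot\xi_2)u^2_{\max}$.
\end{itemize}
This steering toward $\{\dot\xi_2=0\}$ is what justifies the first case of $u^2$ and the claimed bang--singular--bang structure. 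Without it, your Step~2 only restates that case of the theorem.
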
 \smallskip

The proof of \prettyref{th:opt_solution} is given in the appendix. Thus, the optimal solution candidate of the OCP is a \textit{bang-singular-bang} structure and on the singular arc, the control law takes the form \prettyref{eq:tracking_law}.

\section{Example} \label{sec:example}
Consider the example of the kinematic model of a steerable axle taken from \cite{Nijmeijer_van_der_Schaft_1990}, where the dynamic model is given by $\dot{x} = g_au^a$ with $g_1 = \sin{x^3}\partial_{x^1} + \cos{x^3}\partial_{x^2}$ and 
$g_2 = \partial_{x^3}$.
The kinematic model admits the flat output $\varphi = (x^1,x^2)$. As a first step, \prettyref{eq:non_deg} has to be verified. For the steerable axle, one obtains $\sin^2 \of{x^3} + \cos^2\of{x^3} = 1 \neq 0$. Moreover, since $g_2 = \partial_{x^3}$, it immediately follows that $\lie{g_2} \varphi^i = 0 \,, \forall i \in \{1,2\} $. For the weighting matrices $Q = \diag(100,100)$ and $M = \diag(1,1)$, the coefficients in \prettyref{th:opt_solution} evaluate to
$A_{11} = 1\,,\, A_{12} = 0\,,\, A_{21} = 0\,,\, A_{22} = 1$. Hence, $A_{11} = 1 > 0\,,\, (A_{11}A_{22}-A_{21}A_{12})(u^1)^2 = (u^1)^2 > 0$ and therefore the corresponding regularity conditions are satisfied for $u^1\neq 0$. The steerable axle provides a simple example in which the theoretical assumptions can be explicitly verified and the resulting control law can be computed in closed form. The closed-loop behavior is illustrated in \prettyref{fig:trajectory}, which shows the resulting trajectories together with the desired trajectory under the control law of \prettyref{th:opt_solution}.

\section{Conclusion}
For the considered system class, the flatness-based characterization provides the geometric structure needed to derive an analytic PMP-based optimal tracking law with explicit singular-arc characterization, thereby avoiding the numerical solution of a TPBVP at each time step. Moreover, it was shown that the singular arc induces a bang-singular-bang structure, while the tracking error dynamics on the singular arc reduces to a linear system of order two. Future work will extend the presented results to the $n$-dimensional case. In addition it is planned to apply the presented method to a broader system class of flat systems that are static feedback equivalent to structurally flat triangular forms, see~\cite{gstottner_structurally_2022} and~\cite{hartl_flat_2025} for the two- and the multi-input case, respectively.
\begin{figure}
  \centering
  \input{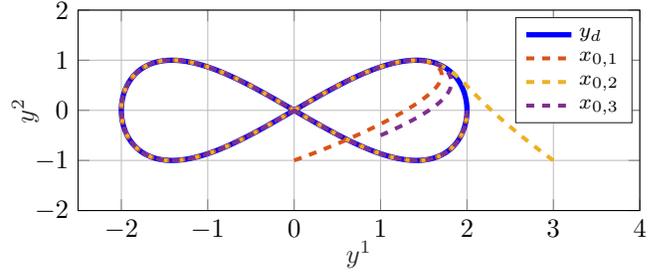}
  \caption{Trajectories for three different initial conditions:
$x_{0,1}=(0,-1,\pi/3)^T$,
$x_{0,2}=(3,-1,-\pi/4)^T$,
$x_{0,3}=(1,-\tfrac{1}{2},\pi/3)^T$.
The solid line denotes the desired trajectory
$y_d(t)=(2\cos(2\pi t/T),\,\sin(\pi t/T))^T$ with $T=5$.
The input bounds are set to $u^1_{\max}=u^2_{\max}=10$.}
  \label{fig:trajectory}
\end{figure}
\section*{APPENDIX}
\begin{proof}[Proof of \prettyref{th:opt_solution}]
    In order to solve \prettyref{eq:opt_problem}, with the cost functional \prettyref{eq:cost_functional} and the associated terminal and running costs \prettyref{eq:terminal_cost} and \prettyref{eq:running_cost}, respectively, \prettyref{th:solution_costate_u} and the following relations are used. Differential flatness for this system class require that $\mathcal{T}(\mathcal{M})=\Span{g_1,g_2,[g_1,g_2]}$ and therefore the higher order lie brackets 
$\liebrack{g_1}{[g_1,g_2]} = \alpha^1g_1 + \alpha^2g_
        2 +\alpha^3[g_1,g_2] ,\,\,
        \liebrack{g_2}{[g_1,g_2]} = \beta^1g_1 + \beta^2g_
        2 +\beta^3[g_1,g_2]$,
can be expressed as linear combinations of the basis vector fields and due to $\lie{g_2}\varphi^i = 0$ it immediately follows that $\lie{[g_1,[g_1,g_2]]}\varphi^j = \alpha^1\lie{g^1}\varphi^j + \alpha^3\lie{[g_1,g_2]}\varphi^j, \,\, 
        \lie{[g_2,[g_1,g_2]]}\varphi^j = \beta^1\lie{g^1}\varphi^j + \beta^3\lie{[g_1,g_2]}\varphi^j$.
By the use of this relation and the conditions \prettyref{eq:opt_u1_stat}, \prettyref{eq:consistency_condition} and
\begin{equation*}
    \scalemath{0.92}{\begin{aligned}
        \lambda_l\liebrack{g_1}{[g_1,g_2]}^l &= \alpha^1\dot{e}^iM_{ij}\lie{g_1}\varphi^j + \alpha^3\dot{e}^iM_{ij}\lie{[g_1,g_2]}\varphi^j \\ 
        \lambda_l\liebrack{g_2}{[g_1,g_2]}^l &= \beta^1\dot{e}^iM_{ij}\lie{g_1}\varphi^j + \beta^3\dot{e}^iM_{ij}\lie{[g_1,g_2]}\varphi^j \,,
\end{aligned}} \end{equation*}
the second order consistency condition \prettyref{eq:consistency_condition_order2} simplifies to 
\begin{equation} \label{eq:consistency_condition_order2_simp}
    \scalemath{0.92}{\begin{aligned}
        \PartDDiffT{}\xi_2 = \Big[&-(\ddot{y}_d^i - \lie{g_a}\lie{g_1}\varphi^iu^au^1-\lie{g_1}\varphi^i\dot{u}^1)M_{ij}\lie{[g_1,g_2]}\varphi^j \\
        &+ e^iQ_{ij}\lie{[g_1,g_2]}\varphi^j\Big]u^1 = 0 \,. 
\end{aligned}} \end{equation}
Next, the stationary condition \prettyref{eq:opt_u1_stat} is differentiated with respect to time, to receive an equation for $\dot{u}^1$, yielding 
\begin{equation} \label{eq:sationary_condition_order1_simp}
    \scalemath{0.92}{\begin{aligned}
        \PartDiffT{}\xi_1 = &-(\ddot{y}_d^i - \lie{g_a}\lie{g_1}\varphi^iu^au^1-\lie{g_1}\varphi^i\dot{u}^1)M_{ij}\lie{g_1}\varphi^j \\
        &+ e^iQ_{ij}\lie{g_1}\varphi^j = 0 \,.
\end{aligned}} \end{equation}
Finally \prettyref{eq:consistency_condition_order2_simp} and \prettyref{eq:sationary_condition_order1_simp} are solved for $\dot{u}^1, u^2$ leading to 
\begin{equation*}
    \scalemath{0.92}{\begin{aligned}
   \dot{u}^1_{\text{sing}} = \frac{A_{12}b_2-A_{22}b_1}{A_{12}A_{21}-A_{11}A_{22}} \quad
    u^2_{\text{sing}} = \frac{A_{11}b_2-A_{21}b_1}{(A_{12}A_{21}-A_{11}A_{22})u^1}
\end{aligned}} \end{equation*} where $A_{11}, A_{22}, A_{12}, A_{21}, b_{1}, b_{2}$ are given in \prettyref{th:opt_solution}.
For $u^1$, the ansatz function \prettyref{eq:lambda_ansatz}  is inserted into \prettyref{eq:opt_u1}, and therefore, the optimal control input $u^1$ on the singular arc is
\begin{equation*}
    \scalemath{0.92}{\begin{aligned}
     u^1_{\text{sing}} &= \frac{\Big[\dot{y}_d^iM_{ij} +  e^i\bar{Q}_{ij}\Big]\lie{g_1}\varphi^j}{A_{11}} \,
\end{aligned}} \end{equation*} 
and therefore 
{\begin{align*}
u^1 =
\begin{cases}
\sign(u^1_{\text{sing}})\,u^1_{\max}, & |u^1_{\text{sing}}| \geq u^1_{\max},\\
u^1_{\text{sing}}, & \text{otherwise.}
\end{cases}
\end{align*}}
 The \emph{Legendre-Clebsch condition} $A_{11} > 0$ and $A_{22} > 0$ hold, if and only if $(\lie{g_1}\varphi^1,\lie{g_1}\varphi^2) \neq (0,0)$ and $(\lie{[g_1,g_2]}\varphi^1,\lie{[g_1,g_2]}\varphi^2) \neq (0,0)$, respectively.  Moreover, by the Cauchy–Schwarz inequality, $A_{21}A_{12} \leq A_{11}A_{22}$ with equality if and only if $\lie{g_1}\varphi$ and  $\lie{[g_1,g_2]}\varphi$ are linearly dependent. Hence, for $u^1\neq0$ the \emph{generalized Legendre-Clebsch condition}  $(A_{11}A_{22}-A_{21}A_{12})(u^1)^2 > 0$ hold, if and only if \prettyref{eq:non_deg} holds. Since, in addition, $\lie{g_2}\varphi^i = 0\,, i \in \{1,2\}$, the flatness conditions of \cite[Theorem 5]{li_describing_2010} are satisfied. Therefore, for the given OCP, the \emph{Legendre-Clebsch condition} together with the \emph{generalized Legendre-Clebsch condition} hold, if and only if the system \prettyref{eq:system} is $x$-flat.
One can easily convince oneself that if the switching function \prettyref{eq:opt_u2_stat} is violated, the optimal control $u^2$ lies on the boundary of $\mathcal{U}$, since the Hamiltonian is affine in $u^2$. Therefore, if $\xi_2 > 0$, one could reduce the value of the Hamiltonian by the choice of the lower bound of $\mathcal{U}$ and vice versa. Howerever, within this work, $\lambda$ is solved analytically and due to $\lie{g_2}\varphi^i = 0$, the switching function  
$\xi_2 \equiv 0$. Therefore, the first consistency condition \prettyref{eq:consistency_condition} determines whether the control is selected from the upper or lower boundary of $\partial \mathcal{U}$. Hence, the time derivative of the switching function $\dot{\xi}_2$ is used to determine if the trajectory leaves the singular manifold given by $\mathcal{S} = \{\xi_2 = \dot{\xi}_2 = 0\}$. In order to show, that the singular manifold is attractive a Lynapunov function of the form $V = \tfrac{1}{2}(\dot{\xi}_2)^2$ is introduced. In contrast to the singular analysis before, it is now assumed that $\dot{\xi}_2 \neq 0$. Therefore, the second time derivative of the switching function, with inserted $\dot{u}^1$,\footnote{$\dot{u}^1$ is calculated from $\dot{\xi}_1$ where $\dot{\xi}_2 \neq 0$.} follows as $\ddot{\xi}_2 = f^1 - f^2u^2$ where 
\begin{equation} \label{eq:xi2_ddot_off_manifold}
    \scalemath{0.85}{\begin{aligned}
        f^1 &= -\frac{A_{11}b_2-A_{21}b_1}{A_{11}}u^1 + \frac{\alpha^3(u^1)^2A_{11}+b_1}{A_{11}u^1}\dot{\xi}_2 \\
        f^2 &= \frac{(A_{11}A_{22}-A_{12}A_{21})(u^1)^2}{A_{11}}-\Big(\beta^3+\frac{A_{12}+A_{21}}{A_{11}} + \frac{\dot{\xi}_2}{(u^1)^2}\Big)\dot{\xi}_2\,.
\end{aligned}} \end{equation} 
By introducing $h^2 = \tfrac{f^1}{f^2}$ it follows, that $\ddot{\xi}_2 = (h^2 - u^2)f^2$. The time derivative of the Lynapunov function is given by $\dot{V} = \dot{\xi}_2\ddot{\xi}_2 = \dot{\xi}_2(h^2 - u^2)f^2$. Introducing the control $u^2 = \sign(\dot{\xi}_2)\,u^2_{\max}$ it follows that $\dot{V} = |\dot{\xi}_2|(\sign(\dot{\xi}_2)h^2 - u^2_{\max})f^2$. Due to the \emph{Legendre-Clebsch condition} $A_{11} > 0$ and \emph{generalized Legendre-Clebsch condition} $(A_{11}A_{22} - A_{12}A_{21})(u^1)^2 > 0 $ it follows from \prettyref{eq:xi2_ddot_off_manifold} that $f^2>\delta>0$  for a sufficient small $|\dot{\xi}_2|$. In the first case, it is assumed that $h^2$ strictly lies in the interior of $\mathcal{U}$. Therefore, one could always find an $\varepsilon > 0$ such that $|h^2| < u^2_{\max} - \varepsilon$. Since $\sign(\dot{\xi}_2)h^2 \leq |h^2| < u^2_{\max} - \varepsilon $ the time deriviative of the Lynapunov can be bounded as  
\begin{align*}
    \dot{V} \leq |\dot{\xi}_2|(u^2_{\max} - \varepsilon - u^2_{\max})f^2 \leq -|\dot{\xi}_2|\varepsilon\delta \leq 0\,.
\end{align*} 
Therefore, in the case that $\dot{\xi}_2\neq0$, the control law is given by $u^2 = \sign(\dot{\xi}_2)\,u^2_{\max}$, which is steering the trajectory toward the manifold $\mathcal{E}=\{\dot{\xi}_2=0\}$. If $|h^2|  > u^2_{\max}$, stability can not be guaranteed. 
For the considered example in \prettyref{sec:example}
it is illustrated in \prettyref{fig:f2_h2}, that in a neighborhood of $\mathcal{S}$, $f^2$ remains strictly positive and $h^2$ lies in the interior of the admissible set $\mathcal{U}$. Thus, the optimal control $u^2$ is given by
\begin{equation*}
    \scalemath{0.92}{\begin{aligned}
        u^2 =
        \begin{cases}
            \sign(\dot{\xi}_2)\,u^2_{\max}, & \dot{\xi}_2 \neq 0\,,\\
            \sign(u^2_{\text{sing}})\,u^2_{\max} &|u^2_{\text{sing}}| \geq u^2_{\max}\,\text{and}\, \dot{\xi}_2 = 0\\
            u^2_{\text{sing}}, & \text{otherwise,}
        \end{cases}
\end{aligned}} \end{equation*} 
The singular expression $u^2_{\mathrm{sing}}$ is valid only on the singular manifold, that is, as long as the consistency condition \prettyref{eq:consistency_condition} is satisfied. If this condition is violated, the singular interior control no longer keeps the trajectory on the singular manifold, and the control must be selected on the boundary of the admissible set $\mathcal{U}$.
\begin{figure}
  \centering
  \input{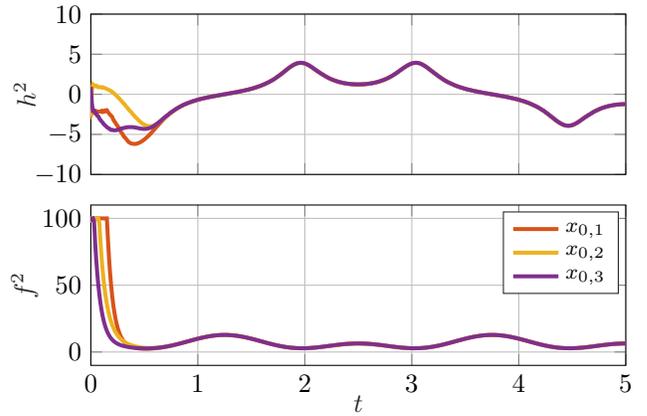}
    \caption{Time evolution of $f^2$ and $h^2=f^1/f^2$ for the trajectories given in \prettyref{fig:trajectory}.
    The plot shows that $f^2$ remains strictly positive in a neighborhood of the
    singular manifold, while $h^2$ stays in the interior of the admissible set
    $\mathcal U$, i.e., $|h^2|<u^2_{\max}$.}
    \label{fig:f2_h2}
\end{figure}
\end{proof}
\bibliographystyle{IEEEtran}
\bibliography{Ref}
\end{document}